\theoremstyle{definition}
\newtheorem{definition}{Definition}
\newtheorem{theorem}{Theorem}[section]
\newtheorem{lemma}[theorem]{Lemma}
\newtheorem{example}[theorem]{Example}
\title{Numerical Approximation of Stochastic Volterra Integral Equation Using Walsh Function
}
\author{
  Prit Pritam Paikaray, Sanghamitra Beuria, and Nigam Chandra Parida \\
  Department of Mathematics, College of Basic Science and Humanities\\
  OUAT,
		Bhubaneswar, Odisha,751003, India.\\
  \texttt{\{paikaraypritpritam@gmail.com, sbeuria108@gmail.com, ncparida@gmail.com\}} \\
}
\begin{document}
\maketitle

\begin{abstract}
This paper provides a numerical approach for solving the linear stochastic Volterra integral equation using Walsh function approximation and the corresponding operational matrix of integration. A convergence analysis and error analysis of the proposed method for stochastic Volterra integral equations with Lipschitz functions are presented. Numerous examples with available analytical solutions demonstrate that the proposed method solves linear stochastic Volterra integral equations more precisely than existing techniques. In addition, the numerical behaviour of the method for a problem with no known analytical solution is demonstrated.
\end{abstract}

\keywords{Stochastic Volterra integral equation \and Brownian motion \and It$\hat{o}$ integral \and Walsh approximation \and Lipschitz condition}
\section{Introduction} 
Numerous fields, including the physical sciences, biological sciences, agricultural sciences, and financial mathematics, which includes option pricing, make extensive use of stochastic differential equations (SDE) \cite{Oksendal,Kloeden,tudor}. In these fields, stochastic Volterra integral equations (SVIE) play a crucial role. In a manner similar to other differential equations, many SDEs are practically impossible to solve, and the SVIE makes the problem even more challenging. Therefore, the numerical approximation method becomes vital when solving such problems. The approximate solution to many SVIEs can be estimated using various numerical techniques. Recently, orthogonal functions including block pulse function (BPF), Haar wavelet, Legendre polynomials, Laguerre polynomials, and Chebyshev's polynomials have been utilised to approximate the solution of SVIE \cite{Paley, Chen,Maleknejad,Mohammadi,Santanu,Saha,Basirat,Sohrabi,Cheng}.

The Walsh functions provide an orthonormal system that accepts just the values $-1$ and $1$. Because of this, a lot of mathematicians think of the Walsh system, which was developed in 1923 \cite{Walsh} and has many uses in digital technology, as an artificial orthonormal system. The fact that a computer can accurately estimate any Walsh function's current value at any given time gives it a significant edge over traditional trigonometric functions. The Walsh function was utilized by Chen and Hsiao in 1975 to resolve the variational problems \cite{Chen}. They applied a similar idea in 1979 to resolve the integral equation \cite{Hsiao}.
The technique's key property is that it transforms the problem into an algebraic system, which is then solved to yield an approximate solution to the problem. In this paper, we apply the Walsh function \cite{Walsh} to approximate the solution $x(t)$ of the following linear SVIE
 \begin{equation}    x(t)=f(t)+\int_{0}^{t}k_1(s,t)x(s)ds+\int_{0}^{t}k_2(s,t)x(s)dB(s)\label{Eq:Ram}
\end{equation}\label{SVIE}where $x(t)$, $f(t)$, $k_1(s,t)$ and $k_2(s,t)$ for $s,t\in[0,T)$, represent the stochastic processes based on the same probability space $(\Omega,F,P)$ and $x(t)$ is unknown. Here $B(t)$ is a Brownian motion \cite{Kloeden,Oksendal} and $\int_{0}^{t}k_2(s,t)x(s)dB(s)$ is the It$\hat{o}$ integral.

In most of the previous works, the evaluation is primarily based on the assumption that the derivatives $f'(t)$, $\frac{\partial^2 k_i}{\partial s \partial t}$ for $i=1, 2$, exists and bounded. Whereas in this paper, by converting BPF approximation to Walsh function approximation, we anticipate solely Lipschitz continuity of the functions $f(t), k_1(s, t)$ and $k_2 (s, t)$ which gives the same rate of convergence which is linear but it permits to consider more general form of SVIE that has to be integrated. In the last section, the approximate solution is compared with the exact solution numerically to check the validity of the method.
\section{Walsh Function and its Properties}\label{Walsh}
\begin{definition}[Rademacher Function]
	Rademacher function $r_i(t)$, $i=1,2,\hdots$, for $t\in[0,1)$ is defined  by \cite{Walsh} 
	
	$$r_i(t)=
	\Biggl\{
	\begin{aligned}
		1 & \quad \text{ $i=0$},\\
		sgn(sin(2^i\pi t)) & \quad \text{otherwise}\\
	\end{aligned}$$
	where,
	$$sgn(x)=
	\Biggl\{
	\begin{aligned}
		1 & \quad \text{ $x>0$},\\
		0 & \quad \text{ $x=0$},\\
		-1 & \quad \text{$x<0$}.
	\end{aligned}$$
\end{definition}
\begin{definition}[Walsh Function]
	The $n^{th}$ Walsh function for $n=0,1,2,\cdots,$ denoted by $w_n(t)$, $t\in[0,1)$ is defined \cite{Walsh} as 
	$$w_n(t)=(r_q(t))^{b_q}.(r_{q-1}(t))^{b_{q-1}}.(r_{q-2}(t))^{b_{q-2}}\hdots (r_{1}(t))^{b_1}$$ where $n=b_q2^{q-1}+b_{q-1}2^{q-2}+b_{q-2}2^{q-3}+\hdots +b_12^{0}$ is the binary expression of $n$. Therefore, $q$, the number of digits present in the binary expression of $n$ is calculated by $q=\big[\log_2n\big]+1$ in which $\big[\cdot\big]$ is the greatest integer less than or equal to $'\cdot'$.
\end{definition}

The first $m$ Walsh functions for $m \in \mathbb{N}$ can be written as an $m$-vector by
$$W(t)=\begin{bmatrix}
	w_0(t) & w_1(t) &w_2(t)\hdots w_{m-1}(t)
\end{bmatrix}^T.$$ The Walsh functions satisfy the following properties:

\subsection*{Orthonormality}
The set of Walsh functions is orthonormal. i.e., 
$$\int_{0}^{1}w_i(t)w_j(t)dt=\Biggl\{\begin{aligned}
	1 & \quad \text{i=j,}\\
	0 & \quad \text{otherwise}.
\end{aligned}$$
\subsection*{Completeness}
For every $f\in L^2([0,1))$ 
\begin{equation*}
	\int_{0}^{1}f^2(t)dt=\sum_{i=0}^{\infty}f_i^2\lvert\lvert w_i(t)\lvert\lvert^2
\end{equation*}
where $f_i=\int_{0}^{1}f(t)w_i(t)dt$.
\subsection*{Walsh Function Approximation}
Any real-valued function $f(t)\in L^2[0,1)$ can be approximated as 
$$f_m(t)=\sum_{i=0}^{m-1}c_iw_i(t)$$
where, $c_i=\int_{0}^{1}f(t)w_i(t)dt$.
The matrix form of the approximation is given by 
\begin{equation}
	f(t)=F^TT_WW(t) \label{Eq:2}
\end{equation}where
$ F=
\begin{bmatrix}
	f_0 &f_1 &f_2 \hdots f_{m-1}
\end{bmatrix}^T$ and 
$f_i=\int_{ih}^{(i+1)h}f(s)ds$ and $T_W$ is called the operational matrix for Walsh function.\\
One can see from \cite{Cheng} that,$$T_WT_W^T=mI \, \textrm{and} \, T_W^T=T_W$$

Similarly, $k(s,t)\in L^2([0,1)\times[0,1))$ can be approximated by
$$k_m(s,t)=\sum_{i=0}^{m-1}\sum_{j=0}^{m-1}c_{ij}w_i(s)w_j(t)$$
where, $c_{ij}=\int_{0}^{1}\int_{0}^{1}k(s,t)w_i(s)w_j(t)dtds$.\\
with the matrix form as
\begin{equation}
	k(s,t)=W^T(s)T_WKT_WW(t)=W^T(t)T_WK^TT_WW(s) \label{Eq:3}
\end{equation}
where $K=[k_{ij}]_{m\times m}, k_{ij}=\int_{ih}^{(i+1)h}\int_{jh}^{(j+1)h}k(s,t)dtds$.

In the next section, we will find a relation between block pulse function and Walsh function which later used to convert the SVIE to algebraic equation.
\section{Relationship between Walsh Function and Block Pulse Functions (BPFs)}
\begin{definition}[Block Pulse Functions]
	For a fixed positive integer $m$, an $m$-set of BPFs $\phi_i(t), t\in [0,1)$ for $i=0, 1,..., m-1$ is defined as
	$$\phi_i(t)=\biggl\{
	\begin{aligned}
		1 & \quad \text{if $\frac{i}{m}\le t < \frac{(i+1)}{m}, \quad$}\\
		0 & \quad \text{ otherwise}
	\end{aligned}
	$$
	$\phi_i$ is known as the $i$th BPF.
\end{definition}
The set of all $m$ BPFs can be written concisely as an $m$-vector,
$\Phi(t)=\begin{bmatrix}
	\phi_0(t) & \phi_1(t) &\phi_2(t)\hdots \phi_{m-1}(t)
\end{bmatrix}^T$, $t\in[0,1)$.

The BPFs are disjoint, complete, and orthogonal \cite{Hatamzadeh}.

The BPFs in vector form satisfy 
$$ \Phi(t)\Phi(t)^TX=\tilde{X}\Phi(t) \;\textrm{and}\; \Phi^T(t)A\Phi(t)=\hat{A}\Phi(t)$$
where, $X \in \mathbb{R}^{m \times 1}, \tilde{X}$ 
is the $m\times m$ diagonal matrix with $\tilde{X}(i, i)=X(i) \,\textrm{for}\, i=1, 2, 3\cdots m, A\in \mathbb{R}^{m \times m}$ and  $\hat{A}=\begin{bmatrix}
	a_{11}&	a_{22}&\hdots 	&a_{mm}
\end{bmatrix}^T$
is the $m$-vector with elements equal to the diagonal entries of $A$.
The integration of BPF vector $\Phi(t)$, $t\in[0,1)$ can be performed by \cite{Hatamzadeh}
\begin{equation}
	\int_{0}^{t}\Phi(\tau)d\tau=P\Phi(t), t\in[0,1), 
\end{equation}
	where, $P$ is called  deterministic operational matrix of integration.
	Hence, the integral of every function $f(t)\in L^2[0,1)$ can be approximated as$$\int_{0}^{t}f(s)ds=F^TP\Phi(t)$$
	Similarly, It$\hat{o}$ integral of BPF vector $\Phi(t)$, $t\in[0,1)$ can be performed by \cite{Maleknejad} as
	\begin{equation}
		\int_{0}^{t}\Phi(\tau)dB(\tau)=P_S\Phi(t), t\in[0,1)
	\end{equation}
	where, $P_S$
	is called the stochastic operational matrix of integration. Hence, the It$\hat{o}$ integral of every function $f(t)\in L^2[0,1)$ can be approximated as in \cite{Maleknejad} by $$\int_{0}^{t}f(s)dB(s)=F^TP_S\Phi(t).$$
	
	The following theorem describes a relationship between the Walsh function and the block pulse function.
	\begin{theorem}
		Let the $m$-set of Walsh function and BPF vectors are $W(t)$ and $\Phi(t)$ respectively. Then the BPF vectors $\Phi(t)$ can be used to approximate $W(t)$ as $W(t)=T_W\Phi(t)$, $m=2^k$, and $k=0,1,\hdots $, where $T_W=\big[c_{ij}\big]_{m\times m}$, $c_{ij}=w_i(\eta_j)$, for some  $\eta_j=\big(\frac{j}{m},\frac{j+1}{m}\big)$ and $i,j=0,1,2,\hdots m-1$.
	\end{theorem}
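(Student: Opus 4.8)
The plan is to exploit the fact that, for $m=2^k$, every Walsh function $w_n$ with index $n<m$ is \emph{piecewise constant} on the dyadic mesh $\{[\tfrac{j}{m},\tfrac{j+1}{m})\}_{j=0}^{m-1}$, which is exactly the mesh on which the BPFs $\phi_j$ are supported. Once this constancy is established, the expansion $W(t)=T_W\Phi(t)$ becomes essentially a book-keeping identity, so the whole argument rests on a single structural observation about the Rademacher factors.

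First I would establish the piecewise-constancy claim. Since $n<2^k$, the number of binary digits of $n$ satisfies $q=[\log_2 n]+1\le k$, so by definition $w_n$ is a product of Rademacher functions drawn only from $r_1,\dots,r_k$. Each factor $r_i(t)=\mathrm{sgn}(\sin(2^i\pi t))$ changes value only where $\sin(2^i\pi t)=0$, i.e.\ at the points $t=\tfrac{\ell}{2^i}$ for integer $\ell$. For $i\le k$ these can be written as $\tfrac{\ell\,2^{k-i}}{2^k}$, hence they all lie among the mesh endpoints $\tfrac{j}{m}$. Consequently each admissible $r_i$, and therefore the product $w_n$, is constant on every open interval $(\tfrac{j}{m},\tfrac{j+1}{m})$.

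With constancy in hand, I would pick any node $\eta_j\in(\tfrac{j}{m},\tfrac{j+1}{m})$. Then $w_n(t)=w_n(\eta_j)$ for all $t$ in the $j$-th subinterval, so using $\phi_j$ as the indicator of that subinterval gives the scalar identity $w_n(t)=\sum_{j=0}^{m-1}w_n(\eta_j)\phi_j(t)$. Collecting these identities for $n=0,1,\dots,m-1$ into the vectors $W(t)$ and $\Phi(t)$ yields $W(t)=T_W\Phi(t)$ with $(T_W)_{ij}=w_i(\eta_j)=c_{ij}$, which is exactly the asserted matrix.

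The main obstacle is the first step: one must verify carefully that no sign change of any admissible Rademacher factor occurs in the interior of a mesh interval. This reduces to the observation that the zero set of $\sin(2^i\pi t)$ for $i\le k$ is contained in the dyadic grid $\{\tfrac{j}{2^k}\}$; the restriction $n<m=2^k$ (equivalently $q\le k$) is precisely what guarantees that no finer factor $r_i$ with $i>k$ enters the product and spoils constancy. The hypothesis $m=2^k$ is therefore essential rather than cosmetic, and I would flag that the identity fails for general $m$.
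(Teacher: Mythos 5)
Your proof is correct, and it takes a genuinely different route from the paper's. The paper works through the $L^2$ expansion machinery: it writes $w_i(t)=\sum_{j=0}^{m-1}c_{ij}\phi_j(t)$ with the BPF coefficients $c_{ij}=\frac{1}{h}\int_{jh}^{(j+1)h}w_i(t)\,dt$, applies the mean value theorem for integrals to obtain $c_{ij}=w_i(\eta_j)$ for some $\eta_j\in\big(\frac{j}{m},\frac{j+1}{m}\big)$, and only at the end invokes --- without proof --- the fact that $w_i$ is constant on each dyadic subinterval in order to fix $\eta_j$ at the midpoint. You invert this order: you first \emph{prove} the piecewise constancy from the structure of the Walsh functions (for $n<2^k$ the only Rademacher factors appearing are $r_1,\dots,r_q$ with $q\le k$, whose sign changes lie on the grid $\{\ell/2^k\}$), after which the identity $W(t)=T_W\Phi(t)$ is a pointwise book-keeping statement requiring no integration and no mean value theorem. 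Your version buys two things the paper's does not: it closes the logical gap, since the exactness (rather than approximateness) of the finite BPF expansion of $w_i$ is precisely the constancy claim the paper assumes, and it isolates why the hypothesis $m=2^k$ is essential rather than decorative. The paper's version, in exchange, is shorter if one is content to cite the constancy, and it exhibits $c_{ij}$ as the standard BPF expansion coefficient, which is the form used elsewhere in the paper. One caveat applies equally to both arguments: with $\mathrm{sgn}(0)=0$ in the definition, the Walsh functions vanish at the dyadic mesh points themselves, so the identity $W(t)=T_W\Phi(t)$ holds on the open subintervals, i.e.\ almost everywhere, which is all that is needed for the $L^2$ theory that follows.
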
 
	\begin{proof}
		Let $w_i(t)$, $i=0,1,2,\hdots m-1$, where $m=2^k$, be the $i^{th}$ element of the Walsh function vector.
		By expanding $w_i(t)$ into an $m$-term vectors of BPFs we have $w_i(t)=\sum_{j=0}^{m-1}c_{ij}\phi_j(t)=C_i^T\Phi(t)$, $i=0,1,2,\hdots m-1$, where $C_i^T$ is the $i^{th}$ row and $c_{ij}$ is the$(i,j)^{th}$ element of matrix $T_W$
		$$c_{ij}=\frac{1}{h}\int_{0}^{1}w_i(t)\phi_j(t)dt=\frac{1}{h}\int_{jh}^{(j+1)h}w_i(t)dt.$$
		By using Mean value theorem for integral we can write  $$c_{ij}=\frac{1}{h}\int_{jh}^{(j+1)h}w_i(t)dt=\frac{1}{h}\big((j+1)h-jh\big)w_i(\eta_j)=w_i(\eta_j)$$
		where $\eta_j\in \big(\frac{j}{m},\frac{j+1}{m}\big)$, $m=\frac{1}{h}$.\\
		Since $w_i(t)$ is constant in the interval $\big(\frac{j}{m},\frac{j+1}{m}\big)$, we choose $c_{ij}=w_i(\frac{2j+1}{2m})$, $i,j=0,1,2,\hdots m-1$.\\
		
		Hence $W(t)=T_W\Phi(t)$.\\
	\end{proof}	
	
	From the above theorem, it is easy to see $\Phi(t)=\frac{1}{m}T_WW(t).$\\
	With the use of above condition, we prove the following theorem: 
	\begin{lemma}[Integration of Walsh function]
		Suppose that $W(t)$ is a Walsh function vector, then the integral of $W(t)$ w.r.t. $t$ is given by \\
		$\int_{0}^{t}W(s)ds=\wedge W(t)$, where $\wedge =\frac{1}{m}T_WPT_W$ and $$P=\frac{1}{h}\begin{bmatrix}
			1 &2 &2&\hdots &2\\0 &1 &2&\hdots &2\\\vdots &\vdots &\vdots &\ddots &\vdots\\0 &0 &0&\hdots &1
		\end{bmatrix}$$
	\end{lemma}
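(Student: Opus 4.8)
The plan is to reduce everything to the block pulse function (BPF) representation, where the integration operator $P$ is already available, and then translate the result back into the Walsh basis using the conversion formulas established above. The two facts I intend to lean on are the relation $W(t)=T_W\Phi(t)$ from the preceding theorem, together with its inverse $\Phi(t)=\frac{1}{m}T_WW(t)$, and the BPF integration identity $\int_0^t\Phi(\tau)d\tau=P\Phi(t)$.

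First I would substitute $W(s)=T_W\Phi(s)$ into the integral. Since $T_W$ is a constant matrix it factors out of the integral, giving $\int_0^tW(s)ds=T_W\int_0^t\Phi(s)ds$. Applying the BPF integration property then yields $T_WP\Phi(t)$, so the entire computation is now expressed through the known deterministic operational matrix $P$.

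Next I would convert back to the Walsh basis by inserting $\Phi(t)=\frac{1}{m}T_WW(t)$, which produces $\int_0^tW(s)ds=T_WP\cdot\frac{1}{m}T_WW(t)=\frac{1}{m}T_WPT_WW(t)$. Collecting the constant matrix factors then identifies $\wedge=\frac{1}{m}T_WPT_W$, exactly as claimed.

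The argument is essentially a chain of substitutions rather than a hard estimate, so I do not anticipate a genuine obstacle. The one point deserving care is the bookkeeping of scalars: pulling the constant matrix $T_W$ outside the integral sign is immediate, but the factor $\frac{1}{m}$ arising from the inverse relation $\Phi=\frac{1}{m}T_WW$ must be placed correctly, since it is precisely this scalar that fixes the form of $\wedge$. It is also worth recording explicitly that the properties $T_W^T=T_W$ and $T_WT_W^T=mI$ are what justify the inverse conversion formula in the first place, so that the step from $T_WP\Phi(t)$ back to a multiple of $W(t)$ is legitimate.
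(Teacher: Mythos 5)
Your proposal is correct and follows exactly the paper's own proof: substitute $W(s)=T_W\Phi(s)$, pull the constant matrix $T_W$ out of the integral, apply the BPF identity $\int_0^t\Phi(\tau)d\tau=P\Phi(t)$, and convert back via $\Phi(t)=\frac{1}{m}T_WW(t)$ to read off $\wedge=\frac{1}{m}T_WPT_W$. Your added remark that $T_W^T=T_W$ and $T_WT_W^T=mI$ justify the inverse conversion is a sensible piece of explicitness the paper leaves implicit, but it does not change the argument.
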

	\begin{proof}
		Let $W(t)$ be a Walsh function vector, then the integral of $W(t)$ w.r.t. $t$ is
		$$\int_{0}^{t}W(s)ds=\int_{0}^{t}T_W\Phi(s)ds=T_W\int_{0}^{t}\Phi(s)ds$$
		$$=T_WP\Phi(t)=\frac{1}{m}\Big(T_WPT_W\Big)W(t)=\wedge W(t)$$
		where
		$\wedge=\frac{1}{m}\Big(T_WPT_W\Big)$
	\end{proof}
	Here, $\wedge$  is called as the Walsh operational matrix of integration.
	\begin{lemma}[Stochastic integration of Walsh function]
		Suppose that $W(t)$ is a Walsh function vector, then the It$\hat{o}$ integral of $W(t)$ is given by\\
		$\int_{0}^{t}W(s)dB(s)=\wedge _S W(t)$, where $\wedge_S =\frac{1}{m}T_WP_ST_W$ and $$P_S=
		\begin{bmatrix}
			B(\frac{h}{2}) &B(h)&\hdots &B(h)\\0 &B(\frac{3h}{2})-B(h)&\hdots &B(2h)-B(h)\\ \vdots &\vdots &\ddots &\vdots \\0 &0&\hdots &B(\frac{(2m-1)h}{2})-B((m-1)h)
		\end{bmatrix}.$$
	\end{lemma}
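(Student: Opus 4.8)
The plan is to mirror exactly the proof of the preceding deterministic lemma (Integration of Walsh function), replacing the ordinary integral by the It$\hat{o}$ integral and the deterministic operational matrix $P$ by its stochastic counterpart $P_S$. First I would invoke the relation $W(t)=T_W\Phi(t)$ established in the theorem above to rewrite the It$\hat{o}$ integral of the Walsh vector in terms of the BPF vector, namely $\int_{0}^{t}W(s)\,dB(s)=\int_{0}^{t}T_W\Phi(s)\,dB(s)$.

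The key step — and essentially the only place where the stochastic setting genuinely matters — is to pull the factor $T_W$ out of the It$\hat{o}$ integral. Since the entries of $T_W$ are the deterministic values $c_{ij}=w_i(\eta_j)\in\{-1,1\}$, the matrix $T_W$ depends neither on the integration variable $s$ nor on $\omega\in\Omega$; hence by linearity of the It$\hat{o}$ integral we may write $\int_{0}^{t}T_W\Phi(s)\,dB(s)=T_W\int_{0}^{t}\Phi(s)\,dB(s)$. I expect this interchange to be the main point to justify, because commuting a matrix factor with a stochastic integral is legitimate only for deterministic (constant, and a fortiori adapted) coefficients, so I would state explicitly that $T_W$ is a fixed non-random matrix.

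Next I would substitute the BPF stochastic operational matrix of integration $\int_{0}^{t}\Phi(\tau)\,dB(\tau)=P_S\Phi(t)$ from \cite{Maleknejad}, obtaining $\int_{0}^{t}W(s)\,dB(s)=T_WP_S\Phi(t)$. Finally I would apply the converse relation $\Phi(t)=\frac{1}{m}T_WW(t)$, derived immediately after the theorem, to return to the Walsh basis, which yields $T_WP_S\Phi(t)=\frac{1}{m}\bigl(T_WP_ST_W\bigr)W(t)=\wedge_S W(t)$ and thereby identifies $\wedge_S=\frac{1}{m}T_WP_ST_W$ as claimed. The explicit entries of $P_S$ displayed in the statement are inherited directly from the BPF stochastic operational matrix of \cite{Maleknejad} and require no further computation here.
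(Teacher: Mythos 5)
Your proposal matches the paper's own proof step for step: rewrite $W(s)=T_W\Phi(s)$, pull the constant matrix $T_W$ outside the It$\hat{o}$ integral, apply the BPF stochastic operational relation $\int_0^t\Phi(\tau)\,dB(\tau)=P_S\Phi(t)$, and convert back via $\Phi(t)=\frac{1}{m}T_WW(t)$ to obtain $\wedge_S=\frac{1}{m}T_WP_ST_W$. Your explicit justification that the interchange of $T_W$ with the stochastic integral is valid because $T_W$ is a fixed non-random matrix is a point the paper leaves implicit, so your write-up is if anything slightly more careful.
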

	\begin{proof}
		Let $W(t)$ be a Walsh function vector, then the It$\hat{o}$ integral of $W(t)$ is
		$$\int_{0}^{t}W(s)dB(s)=\int_{0}^{t}T_W\Phi(s)dB(s)=T_W\int_{0}^{t}\Phi(s)dB(s)$$ 
		$$=T_WP_S\Phi(t)=\frac{1}{m}\Big(T_WP_ST_W\Big)W(t)=\wedge_S W(t),$$
		where
		$\wedge_S=\frac{1}{m}\Big(T_WP_ST_W\Big)$.\\
		
	\end{proof}
	Here, $\wedge_S$  is called the Walsh operational matrix for It$\hat{o}$ integral.
	\section{Numerical Solution of  Stochastic Volterra Integral Equation}
	We consider following linear stochastic Volterra integral equation(SVIE)
	\begin{equation}\label{Eq:LSVIE}
		x(t)=f(t)+\int_{0}^{t}k_1(s,t)x(s)ds+\int_{0}^{t}k_2(s,t)x(s)dB(s)
	\end{equation}
	where $x(t)$, $f(t)$, $k_1(s,t)$ and $k_2(s,t)$ for $s,t\in[0,T)$, are the stochastic processes defined on the same probability space $(\Omega,F,P)$ and $x(t)$ is unknown. Also $B(t)$ is Brownian motion process and $\int_{0}^{t}k_2(s,t)x(s)dB(s)$ is the Ito Integral.\\
	Using equation \eqref{Eq:2} and\eqref{Eq:3} in \eqref{Eq:LSVIE} we have
	\begin{eqnarray}
		X^TT_WW(t)&=&F^TT_WW(t)+\int_{0}^{t}W^T(t)T_WK^T_1T_WW(s)W^T(s)T_WXds\nonumber\\
		&&+\int_{0}^{t}W^T(t)T_WK^T_2T_WW(s)W^T(s)T_WX dB(s)\nonumber\\
		&=&F^TT_WW(t)+W^T(t)T_WK^T_1T_W\int_{0}^{t}W(s)W^T(s)T_WXds\nonumber\\
		&&+W^T(t)T_WK^T_2T_W\int_{0}^{t}W(s)W^T(s)T_WX dB(s)\label{Laxman}	
	\end{eqnarray}
	
	Now
	\begin{eqnarray*}
		&&	\int_{0}^{t}W(s)W^T(s)T_WXds\\
		&&=\int_{0}^{t}T_W\Phi(s)\Phi^T(s)T_WT_WXds\\
		&&=mT_W\int_{0}^{t}\Phi(s)\Phi^T(s)Xds\\
		&&	=mT_W\tilde{X}\int_{0}^{t}\Phi(s)ds\\
		&&	=mT_W\tilde{X}P\frac{1}{m}T_WW(t).
	\end{eqnarray*}
	
	Hence
	\begin{equation}
		\int_{0}^{t}W(s)W^T(s)T_WXds=T_W\tilde{X}PT_WW(t)\label{int}
	\end{equation}
	Similarly,
	\begin{equation}
		\int_{0}^{t}W(s)W^T(s)T_WXdB(s)=mT_W\tilde{X}P_S\frac{1}{m}T_WW(t)=T_W\tilde{X}P_ST_WW(t)\label{intS}
	\end{equation}
	Substituting \eqref{int} and \eqref{intS} in \eqref{Laxman} and using the condition of orthonormality, we get
	\begin{eqnarray*}
		X^TT_WW(t)&=&F^TT_WW(t)+mW^T(t)T_WK^T_1\tilde{X}PT_WW(t)\\
		&&+mW^T(t)T_WK^T_2\tilde{X}P_ST_WW(t)\\
		&=&F^TT_WW(t)+W^T(t)T_WH_1T_WW(t)\\
		&&+W^T(t)T_WH_2T_WW(t)\\
		&=&F^TT_WW(t)+m\hat{H_1}^TT_WW(t)+m\hat{H_2}^TT_WW(t)\\
	\end{eqnarray*}	
	which implies that, 
	\begin{equation}
		\Big(X^T-F^T-m\hat{H_1}^T-m\hat{H_2}^T\Big)T_WW(t)=0 \label{Eq:Final1}
	\end{equation}
	where $H_1=mK^T_1\tilde{X}P$, $H_2=mK^T_2\tilde{X}P_S$.\\
	Hence 
	\begin{equation}
		\Big(X-F-m\hat{H_1}-m\hat{H_2}\Big)=[0]_{m\times 1}\label{Krishna}
	\end{equation}
	can be solved to obtain a non trivial solution of the given stochastic Volterra integral equation \eqref{Eq:LSVIE}.
	
	\section{Error Analysis} 
	In this section, we analyze the error between the approximate solution and the exact solution of the stochastic Volterra integral equation. Before we start the analysis let us define, $\|X\|_2=E(|X|^2)^\frac{1}{2}$.\\
	\begin{theorem}\label{fin}
		If $f\in L^2[0,1)$ satisfies the Lipschitz condition with Lipschitz constant $C$, then $\|e_m(t)\|_2=O(h)$, where $e_m(t)=|f(t)-\sum_{i=0}^{m-1}c_iw_i(t)|$ and  $c_i=\int_{0}^{1}f(s)w_i(s)ds$.  
	\end{theorem}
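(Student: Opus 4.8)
The plan is to reduce the orthonormal Walsh approximation to a piecewise-constant (block pulse) approximation given by local averages of $f$, and then control the error of that simpler object directly from the Lipschitz bound.

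First I would identify the finite Walsh sum with the interval average of $f$. Writing $f_j=\int_{jh}^{(j+1)h}f(s)\,ds$ and noting that $w_i$ is constant on $[jh,(j+1)h)$ with value $w_i(\eta_j)=c_{ij}$, the Walsh coefficients satisfy $c_i=\int_0^1 f(s)w_i(s)\,ds=\sum_j c_{ij}f_j$, i.e.\ $C=T_WF$ with $F=[f_0,\dots,f_{m-1}]^T$. Hence the approximation in matrix form is $\sum_{i=0}^{m-1}c_iw_i(t)=C^TW(t)=F^TT_WW(t)$, which is exactly equation~\eqref{Eq:2}. Substituting the relation $W(t)=T_W\Phi(t)$ and using $T_W^T=T_W$ together with $T_WT_W^T=mI$ gives $\sum_{i=0}^{m-1}c_iw_i(t)=mF^T\Phi(t)$. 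Since $m=1/h$ and $\Phi$ is the block pulse vector, on each subinterval $[jh,(j+1)h)$ this collapses to $mf_j=\frac{1}{h}\int_{jh}^{(j+1)h}f(s)\,ds$, the average of $f$ over that subinterval. (Equivalently, because $m=2^k$ the span of $w_0,\dots,w_{m-1}$ coincides with the span of $\phi_0,\dots,\phi_{m-1}$, so the orthonormal Walsh projection is precisely the averaging projection.) This identification is the crux of the argument and is exactly where the dyadic hypothesis $m=2^k$ and the algebra $T_WT_W^T=mI$ are used.

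With this in hand, for $t\in[jh,(j+1)h)$ I would write
$$e_m(t)=\Big|\,f(t)-\frac{1}{h}\int_{jh}^{(j+1)h}f(s)\,ds\Big|=\Big|\frac{1}{h}\int_{jh}^{(j+1)h}\big(f(t)-f(s)\big)\,ds\Big|.$$
Taking the $\|\cdot\|_2$ norm and applying Minkowski's integral inequality to pull the norm inside the $s$-integral yields
$$\|e_m(t)\|_2\le\frac{1}{h}\int_{jh}^{(j+1)h}\|f(t)-f(s)\|_2\,ds.$$
The Lipschitz hypothesis gives $\|f(t)-f(s)\|_2\le C|t-s|$, and since $|t-s|<h$ throughout the subinterval the right-hand side is at most $\frac{1}{h}\cdot C h\cdot h=Ch$, so $\|e_m(t)\|_2\le Ch=O(h)$.

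I expect the main obstacle to be the first step rather than the estimate: one must verify that the finite Walsh sum collapses \emph{exactly} to the interval average, which depends essentially on $T_WT_W^T=mI$ and the dyadic choice $m=2^k$; for general $m$ the two spans differ and the clean averaging identity fails. The remaining content, namely passing from the deterministic pointwise inequality to the $L^2(\Omega)$ norm via Minkowski's integral inequality so that the stochastic nature of $f$ causes no additional difficulty, is routine.
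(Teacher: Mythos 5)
Your proposal is correct, and its mathematical core is the same fact the paper relies on: for $m=2^k$ the Walsh partial sum $\sum_{i=0}^{m-1}c_iw_i(t)$ coincides with the average of $f$ over the dyadic subinterval of length $h=1/m$ containing $t$, so the approximation error is controlled by the modulus of continuity, which the Lipschitz hypothesis bounds by $Ch$. The difference lies in how that fact is established. The paper's proof is essentially a one-line citation: it invokes the bound $e_m(t)\le\omega(2^{-k},f)\le Ch$ from Golubov's book on Walsh series, without deriving the averaging identity in the one-dimensional case (that identity does appear explicitly, via the Dirichlet kernel $D_m(t)=\sum_{i=0}^{m-1}w_i(t)$, in the paper's proof of the two-dimensional analogue, Theorem \ref{Thk}). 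You instead prove the averaging identity self-containedly from the paper's own operational-matrix algebra, using $C=T_WF$, $W(t)=T_W\Phi(t)$, $T_W^T=T_W$ and $T_WT_W^T=mI$; this is valid and has the merit of making explicit exactly where the dyadic hypothesis $m=2^k$ enters, which the paper's citation hides. Your treatment of the norm also buys a little extra generality: by pulling the $L^2(\Omega)$ norm inside the integral with Minkowski's integral inequality, you only need the Lipschitz condition in the mean-square sense $\|f(t)-f(s)\|_2\le C|t-s|$, whereas the paper's argument bounds $e_m(t)$ pointwise and therefore implicitly requires the Lipschitz condition to hold pathwise, almost surely, with a uniform constant. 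Both routes yield the same linear rate $\|e_m(t)\|_2\le Ch=O(h)$.
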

	\begin{proof}
		Let $f_m(t)=\sum_{i=0}^{m-1}c_iw_i(t)$ where $c_i=\int_{0}^{1}f(s)w_i(s)ds$.\\
		Suppose $f$ satisfies the Lipschitz condition.\\
		Now,
		$$ e_m(t)=|f(t)-f_m(t)|\le \omega(\frac{1}{2^k},f)\le  Ch.$$
		Here $\omega(\frac{1}{2^k},f)$ is called the modulus of continuity of the function $f$ \cite{Golubov}.\\
		Therefore,	$$\|e_m(t)\|_2\le Ch=O(h).$$

	\end{proof}
	
	\begin{theorem}\label{Thk}
		Suppose $k\in L^2\big([0,1)\times [0,1)\big)$ satisfies the Lipschitz condition with Lipschitz constant $L$. If  $k_m(x,y)=\sum_{i=0}^{m-1}\sum_{j=0}^{m-1}c_{ij}w_i(x)w_j(y)$, $c_{ij}=\int_{0}^{1}\int_{0}^{1}k(s,t)w_i(s)w_j(t)dtds$, then $\|e_m(x,y)\|_2=O(h)$, where $|e_m(x,y)|=|k(x,y)-k_m(x,y)|$.
	\end{theorem}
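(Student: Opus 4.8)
The plan is to follow the same route as Theorem~\ref{fin}, replacing the one-dimensional modulus of continuity by its two-dimensional analogue. The essential structural fact is that each Walsh function $w_i$, $i=0,1,\hdots,m-1$ with $m=2^k$, is constant on every dyadic subinterval $[jh,(j+1)h)$; therefore the linear span of $\{w_0,w_1,\hdots,w_{m-1}\}$ coincides with the span of the block pulse functions $\{\phi_0,\phi_1,\hdots,\phi_{m-1}\}$. Taking tensor products, the two-dimensional Walsh approximation $k_m(x,y)=\sum_{i=0}^{m-1}\sum_{j=0}^{m-1}c_{ij}w_i(x)w_j(y)$ is nothing but the orthogonal projection of $k$ onto the functions that are constant on each dyadic cell $[ih,(i+1)h)\times[jh,(j+1)h)$. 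Hence on the cell indexed by $(i,j)$ the approximation $k_m$ takes the single value $\frac{1}{h^2}\int_{ih}^{(i+1)h}\int_{jh}^{(j+1)h}k(s,t)\,dt\,ds$, the average of $k$ over that cell.

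With this identification, I would bound the error pointwise. Introduce the two-dimensional modulus of continuity
$$\omega(h,k)=\sup\Big\{\,|k(x,y)-k(x',y')| : |x-x'|\le h,\ |y-y'|\le h\,\Big\}.$$
Since $k_m(x,y)$ is an average of values of $k$ taken over points lying in the same dyadic cell as $(x,y)$, the deviation of $k_m$ from $k$ cannot exceed the largest oscillation of $k$ inside that cell, giving the uniform estimate $|e_m(x,y)|=|k(x,y)-k_m(x,y)|\le\omega(h,k)$.

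Finally I would invoke the hypothesis that $k$ is Lipschitz with constant $L$. Any two points of a single cell differ by at most $h$ in each coordinate, so the Lipschitz condition yields $\omega(h,k)\le 2Lh$ (or $L\sqrt{2}\,h$ if the Euclidean metric is used), and consequently $|e_m(x,y)|\le 2Lh$ for all $(x,y)$. Because this bound is independent of $(x,y)$ and of the sample point, passing to the norm $\|\cdot\|_2=E(|\cdot|^2)^{1/2}$ preserves it, and we obtain $\|e_m(x,y)\|_2\le 2Lh=O(h)$, as claimed.

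The step I expect to be the main obstacle is the first one, namely justifying rigorously that the finite Walsh expansion reproduces the cell average rather than merely asserting it by analogy with the one-dimensional case. The cleanest way to make this precise is to use the relation $W(t)=T_W\Phi(t)$ together with the identity $T_WT_W^T=mI$ established earlier: these show that the Walsh coefficient vector is an invertible linear transform of the block pulse coefficient vector, so the two $m$-term approximations span the same space and must coincide. Once this is in place, the remaining modulus-of-continuity and Lipschitz estimates are routine and strictly parallel to the proof of Theorem~\ref{fin}.
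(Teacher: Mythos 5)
Your proof is correct, and it reaches the same pivotal intermediate fact as the paper --- that the $m\times m$ Walsh partial sum $k_m$ equals, at each point, the average of $k$ over the dyadic cell containing that point --- but you justify that fact by a different mechanism. The paper collapses the double sum $\sum_{i,j} w_i(s)w_i(x)w_j(t)w_j(y)$ into a product of Dirichlet kernels $D_m(s\oplus x)\,D_m(t\oplus y)$ and invokes the classical closed form of the Walsh--Dirichlet kernel for $m=2^k$ (it equals $2^k$ on $[0,2^{-k})$ and vanishes elsewhere, cited from Golubov), so that the integral localizes to the cell $\Delta_i^{(k)}\times\Delta_j^{(k)}$ and produces exactly the cell average $2^{2k}\int\int k$. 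You instead observe that $k_m$ is the $L^2$ orthogonal projection of $k$ onto $\mathrm{span}\{w_i(x)w_j(y)\}$, that this span coincides with $\mathrm{span}\{\phi_i(x)\phi_j(y)\}$ because $W(t)=T_W\Phi(t)$ with $T_W$ invertible (via $T_WT_W^T=mI$), and that a projection depends only on the subspace, not the basis --- hence $k_m$ is the piecewise cell-average function. Your route is more elementary and more self-contained, using only facts the paper itself establishes and avoiding dyadic addition and Dirichlet-kernel theory entirely; the paper's route is shorter but leans on external Walsh--Fourier machinery. The remaining modulus-of-continuity/Lipschitz step and the passage to $\|\cdot\|_2$ are the same in both arguments (your constant $2Lh$ versus the paper's $\sqrt{2}Lh$ reflects only the choice of metric on $[0,1)^2$ and does not affect the $O(h)$ conclusion).
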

	\begin{proof}
		It is clear from \cite{Golubov} that,
		\begin{eqnarray*}
			k_m(x,y)&=&\sum_{i=0}^{m-1}\sum_{j=0}^{m-1}\bigg(\int_{0}^{1}\int_{0}^{1}k(s,t)w_i(s)w_j(t)dtds\bigg)w_i(x)w_j(y)\\
			&=&\sum_{i=0}^{m-1}\sum_{j=0}^{m-1}(\int_{0}^{1}\int_{0}^{1}k(s,t)w_i(s)w_i(x)w_j(t)w_j(y)dtds)\\
			&=&\int_{0}^{1}\int_{0}^{1}k(s,t)D_m(t\oplus y)D_m(s\oplus x)dtds\\
			&=&2^k.2^k\int_{\Delta_i^{(k)}}^{}\int_{\Delta_j^{(k)}}^{}k(s,t)dtds
		\end{eqnarray*}
		where, $D_m(t)=\sum_{i=0}^{m-1}w_i(t)$ is called the Dirichlet kernel\cite{Golubov}.\\
		Hence, $$|k_m(X)-k(X)|=2^{2k}\int_{\Delta_i^{(k)}}^{}\int_{\Delta_j^{(k)}}^{}|k(T)-k(X)|dT\\$$
		where $X=(x,y)$, $T=(s,t)$. Also note that if $k$ is uniformly Lipschitz with Lipschitz constant $L$, then
		$$|k_m(X)-k(X)|\le 2^{2k}\int_{\Delta_i^{(k)}}^{}\int_{\Delta_j^{(k)}}^{}L|T-X|dT\\$$
		
		Therefore,
		$$\|k_m(X)-k(X)\|_2\le\sqrt{2}Lh=O(h).$$
	\end{proof}
	\begin{theorem}
		Suppose $x_m(t)$ be the approximate solution of the linear SVIE \eqref{Eq:Ram}. If
		\begin{enumerate}
			\item[a)] $f\in L^2[0,1)$, $k_1(s,t)\quad \text{and} \quad k_2(s,t)\in L^2\big( [0,1)\times[0,1)\big)$  satisfies the Lipschitz condition with Lipschitz constants  $C$, $L_1$ and $L_2$ respectively,
			\item[b)]  $|x(t)|\le \sigma$, $|k_1(s,t)|\le \rho _1$ and $|k_2(s,t)|\le \rho_2$
		\end{enumerate}
		then$$ \|x(t)-x_m(t)\|_2^2=O(h^2)$$

	\end{theorem}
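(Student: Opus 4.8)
The plan is to derive a Gronwall-type integral inequality for the mean-square error $E|x(t)-x_m(t)|^2$ and then close it. First I would record that the approximate solution $x_m$ is the solution of the Walsh-discretised equation
\begin{equation*}
x_m(t)=f_m(t)+\int_0^t k_{1,m}(s,t)\,x_m(s)\,ds+\int_0^t k_{2,m}(s,t)\,x_m(s)\,dB(s),
\end{equation*}
where $f_m,k_{1,m},k_{2,m}$ denote the $m$-term Walsh expansions of $f,k_1,k_2$. Subtracting this from \eqref{Eq:Ram} and writing $e(t)=x(t)-x_m(t)$, I would split the error into five contributions: the source term $f(t)-f_m(t)$; the two ``kernel-difference'' terms $\int_0^t (k_i-k_{i,m})\,x_m\,d\mu_i$; and the two ``solution-difference'' terms $\int_0^t k_i\,e\,d\mu_i$, where $d\mu_1=ds$ and $d\mu_2=dB(s)$. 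These arise by inserting and subtracting $k_i(s,t)x_m(s)$ inside each integral.

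Next I would bound each of the five pieces in $\|\cdot\|_2^2$ using $(\sum_{j=1}^5 a_j)^2\le 5\sum_{j=1}^5 a_j^2$. For the source term, Theorem~\ref{fin} gives $\|f-f_m\|_2^2=O(h^2)$ directly. For the deterministic pieces I would apply the Cauchy--Schwarz inequality together with $t\le 1$, and for the stochastic pieces the It\^o isometry, which converts the mean-square of the stochastic integral into an ordinary time integral. The solution-difference terms are then controlled by $\rho_i^2\int_0^t E|e(s)|^2\,ds$ via the uniform bounds $|k_i|\le\rho_i$, producing exactly the memory term needed for Gronwall. The kernel-difference terms are controlled by the pointwise estimate $|k_i-k_{i,m}|\le\sqrt2\,L_i h$ extracted from Theorem~\ref{Thk}, multiplied by a uniform bound on $E|x_m(s)|^2$.

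A point deserving care is the bound on $x_m$: since $m=2^k$, the Walsh partial sum coincides with the block-pulse (cell-average) approximation of $x$, so each cell value is an average of $x$ over an interval, and hence $|x_m(t)|\le\sigma$ follows from $|x(t)|\le\sigma$; this renders the kernel-difference terms $O(h^2)$. Collecting everything yields
\begin{equation*}
E|e(t)|^2\le \alpha h^2+\beta\int_0^t E|e(s)|^2\,ds,
\end{equation*}
with $\alpha=5(C^2+2\sigma^2 L_1^2+2\sigma^2 L_2^2)$ and $\beta=5(\rho_1^2+\rho_2^2)$. Applying Gronwall's inequality gives $E|e(t)|^2\le \alpha h^2 e^{\beta t}\le \alpha e^{\beta}h^2$, that is, $\|x(t)-x_m(t)\|_2^2=O(h^2)$.

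The main obstacle I anticipate is the rigorous setup rather than the final estimate. One must justify that $x_m$ genuinely solves the discretised equation in a form amenable to subtraction: the paper's derivation reduces $\int_0^t W(s)W^T(s)T_W X\,ds$ through the block-pulse identities, so the discretised equation holds in the Walsh-projected sense, and I would have to argue that the residual of this projection is itself $O(h)$. One must also confirm that the It\^o isometry applies, i.e.\ that the integrands are adapted and square-integrable, and that the kernel-error estimate of Theorem~\ref{Thk} is legitimately a pointwise (sup) bound for the stochastic-process kernels. Once the five-way splitting and the uniform bound $|x_m|\le\sigma$ are secured, the Cauchy--Schwarz / It\^o-isometry estimates and Gronwall's lemma are routine.
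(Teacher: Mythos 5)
Your overall route coincides with the paper's: both start from the identity obtained by subtracting the discretised equation from \eqref{Eq:Ram}, both invoke Theorem~\ref{fin} for the source term and Theorem~\ref{Thk} for the kernel error, both treat the deterministic integral with Cauchy--Schwarz and the stochastic one with the It\^o isometry, and both close with Gronwall's inequality. The genuine gap is in how you control the kernel-difference terms $\int_0^t (k_i-k_{im})\,x_m\,d\mu_i$. You bound them by $\sqrt{2}L_ih$ times a uniform bound on $x_m$, and you justify $|x_m|\le\sigma$ by asserting that, for $m=2^k$, the Walsh partial sum is the block-pulse (cell-average) approximation of $x$ and hence inherits the bound on $x$. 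That identification is false in this context: $x_m$ is not the Walsh projection of the exact solution; it is the solution of the discretised equation --- this is precisely the property you use when you subtract the two equations, and the two objects do not coincide. There is no a priori reason the discrete solution obeys the same pointwise bound $\sigma$ as $x$; a uniform moment bound $\sup_m\sup_t E|x_m(t)|^2<\infty$ does hold, but it requires its own Gronwall argument applied to the discretised equation, which you have not supplied. As written, the step producing the $O(h^2)$ contribution from the kernel-difference terms is unsupported.

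The paper sidesteps this issue without any bound on $x_m$: it splits
\[
k_i x - k_{im} x_m \;=\; k_i\,(x-x_m) \;+\; (k_i-k_{im})\,x \;-\; (k_i-k_{im})\,(x-x_m),
\]
so that only the exact solution $x$ (bounded by $\sigma$ by hypothesis b)) multiplies the kernel error, while the extra cross term $|k_i-k_{im}|\,|x-x_m|\le \sqrt{2}L_ih\,|x-x_m|$ is absorbed into the Gronwall memory term, whose coefficient becomes $(\rho_i+\sqrt{2}L_ih)^2$ rather than $\rho_i^2$. If you replace your bound on $x_m$ by this three-term splitting (or, alternatively, first prove the uniform moment bound on $x_m$ as a separate lemma), the rest of your argument --- the five-way energy splitting, Cauchy--Schwarz, It\^o isometry, and Gronwall --- goes through essentially as in the paper and yields the stated $O(h^2)$ estimate.
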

	\begin{proof}
		Let \eqref{Eq:Ram} be the given SVIE and $x_m(t)$ be the approximation to the solution using the Walsh function. 
		
		Then
		\begin{eqnarray*}
			x(t)-x_m(t)&=&f(t)-f_m(t)\\
			&+&\int_{0}^{t}\big(k_1(s,t)x(s)-k_{1m}(s,t)x_m(s)\big)ds\\
			&+&\int_{0}^{t}\big(k_2(s,t)x(s)-k_{2m}(s,t)x_m(s)\big)dB(s)
		\end{eqnarray*}
		that implies,
		\begin{eqnarray*}
			|x(t)-x_m(t)|&\le&| f(t)-f_m(t)|\\\nonumber
			&+&\biggl|\int_{0}^{t}\big(k_1(s,t)x(s)-k_{1m}(s,t)x_m(s)\big)ds\biggr|\\\nonumber
			&+&\biggl|\int_{0}^{t}\big(k_2(s,t)x(s)-k_{2m}(s,t)x_m(s)\big)dB(s)\biggr|.\nonumber
		\end{eqnarray*}
		We know that, $(a+b+c)^2\le 5a^2+5b^2+5c^2$
		\begin{eqnarray*}
			|x(t)-x_m(t)|^2&\le&5| f(t)-f_m(t)|^2\\\nonumber
			&+&5\biggl|\int_{0}^{t}\big(k_1(s,t)x(s)-k_{1m}(s,t)x_m(s)\big)ds\biggr|^2\\\nonumber
			&+&5\biggl|\int_{0}^{t}\big(k_2(s,t)x(s)-k_{2m}(s,t)x_m(s)\big)dB(s)\biggr|^2.\nonumber
		\end{eqnarray*}
		
		which implies that
		\begin{eqnarray}\label{eq:In}
			E\big(|x(t)-x_m(t)|^2\big)&\le&5E\biggl(| f(t)-f_m(t)|^2\biggr)
			+5I_1
			+5I_2.\nonumber
		\end{eqnarray}
		where, $$I_1=E\biggl(\biggl|\int_{0}^{t}\big(k_1(s,t)x(s)-k_{1m}(s,t)x_m(s)\big)ds\biggr|^2\biggr),$$ and
		$$I_2=E\biggl(\biggl|\int_{0}^{t}\big(k_2(s,t)x(s)-k_{2m}(s,t)x_m(s)\big)dB(s)\biggr|^2\biggr)$$
		Now for $i=1,2$, we have
		\begin{eqnarray*}
			|k_i(s,t)x(s)-k_{im}(s,t)x_m(s)|
			\le&&|k_i(s,t)||x(s)-x_m(s)|\\
			&+&|k_i(s,t)-k_{im}(s,t)||x(s)|\\
			&+&|k_i(s,t)-k_{im}(s,t)||x(s)-x_m(s)|\\ 	
		\end{eqnarray*}
		For $i=1,2$, let $|k_i(s,t)|\le\rho_i$, $|x(s)|\le\sigma$ and using Theorem \ref{Thk}, we get
		\begin{equation}\label{normkernel}
			|k_i(s,t)x(s)-k_{im}(s,t)x_m(s)| \le \sqrt{2}L_ih\sigma+ (\rho_i+\sqrt{2}L_ih)|x(t)-x_m(t)|
		\end{equation}
		which gives,
		\begin{eqnarray*}
			I_1
			&\le&E\biggl(\biggl(\int_{0}^{t}\biggl|k_1(s,t)x(s)-k_{1m}(s,t)x_m(s)\biggr|ds\biggr)^2\biggr)\\
			&\le&E\biggl(\biggl(\int_{0}^{t}\big( \sqrt{2}L_ih\sigma+ (\rho_i+\sqrt{2}L_ih)|x(t)-x_m(t)|\big)ds\biggr)^2\biggr)
		\end{eqnarray*}
		By Cauchy- Schwarz inequality, for $t>0$ and $f\in L^2[0,1)$
		$$\biggl|\int_{0}^{t}f(s)ds\biggr|^2\le t\int_{0}^{t}|f|^2ds$$
		this implies,
		\begin{eqnarray*}
			I_1
			&\le&E\biggl(2\int_{0}^{t}\biggl((\sqrt{2}L_1h\sigma)^2+(\rho_1+\sqrt{2}L_1h)^2|x(t)-x_m(t)|^2
			\biggr)ds\biggr)
		\end{eqnarray*}
		
		Therefore,
		\begin{eqnarray}\label{eq:ink}
			I_1
			&\le&2(\sqrt{2}L_1h\sigma)^2+2(\rho_1+\sqrt{2}L_1h)^2E\biggl(\int_{0}^{t}|x(t)-x_m(t)|^2ds\biggr)
		\end{eqnarray}
		Now,
		\begin{eqnarray*}
			I_2
			\le&&	E\biggl(\int_{0}^{t}\biggl|k_2(s,t)x(s)-k_{2m}(s,t)x_m(s)\biggr|^2ds\biggr)\\
			\le&&2E\biggl(\int_{0}^{t}\big((\sqrt{2}L_2h\sigma)^2+(\rho_2+\sqrt{2}L_2h)^2|x(t)-x_m(t)|^2\big)ds\biggr)
		\end{eqnarray*}
		Hence,
		\begin{eqnarray}\label{eq:inkb}
			I_2
			\le&&2(\sqrt{2}L_2h\sigma)^2+2(\rho_2+\sqrt{2}L_2h)^2E\biggl(\int_{0}^{t}|x(t)-x_m(t)|^2ds\biggr)	
		\end{eqnarray}
		
		Using Theorem \ref{fin}, equation \eqref{eq:ink} and \eqref{eq:inkb} in \eqref{eq:In}, we get
		\begin{eqnarray*}
			E\big(|x(t)-x_m(t)|^2\big)&\le&5C^2h^2\\
			&+&5\biggl(2(\sqrt{2}L_1h\sigma)^2+2(\rho_1+\sqrt{2}L_1h)^2E\biggl(\int_{0}^{t}|x(t)-x_m(t)|^2ds\biggr)\biggr)\\
			&+&5\biggl(2(\sqrt{2}L_2h\sigma)^2+2(\rho_2+\sqrt{2}L_2h)^2E\biggl(\int_{0}^{t}|x(t)-x_m(t)|^2ds\biggr)\biggr)
		\end{eqnarray*}

		\begin{eqnarray}
			E\big(|x(t)-x_m(t)|^2\big)&\le&R_1
			+R_2\int_{0}^{t}E\bigl(|x(s)-x_m(s)|^2\bigr)ds
		\end{eqnarray}
		where,
		$$R_1=5\biggl(C^2h^2+2(\sqrt{2}L_1h\sigma)^2+2(\sqrt{2}L_2h\sigma)^2\biggr)$$ and
		$$R_2=5\biggl(2(\rho_1+\sqrt{2}L_1h)^2+2(\rho_2+\sqrt{2}L_2h)^2\biggr)$$
		By using Gronwall's inequality, we have
		\begin{eqnarray}
			E\big(|x(t)-x_m(t)|^2\big)&\le&R_1\exp\biggl(\int_{0}^{t}R_2ds\biggr).
		\end{eqnarray}
		which implies that,
		\begin{equation}
			\|x(t)-x_m(t)\|_2^2	=E\big(|x(t)-x_m(t)|^2\big)\le R_1e^{R_2}=O(h^2)
		\end{equation}
	\end{proof}
	\section{Numerical Examples}\label{Numerical Examples}
In this section, we use the proposed method to solve a variety of SVIEs. The first three examples compare approximate and analytical results to demonstrate the method's convergence. Because an analytical solution is practically impossible to find, the last example illustrates approximate solutions for $m=32,64,$ and $128$ to indicate convergence.
The computations are carried out using Matlab 2013(a).

Define error $E$ as $\lVert E \rVert_\infty=\underset{1\le i\le m}{max} \lvert X_i-Y_i\rvert$, where $X_i$, $Y_i$ are the Walsh coefficient of exact solution and approximate solution respectively.
The number of iterations in the following instances is $n$, the mean of error $E$ is $\bar x_E$, and the standard deviation for error E is $s_E$.

\begin{example}\cite{Mohammadi}\label{Ex1}
	Consider the linear stochastic Volterra integral equation
	$$x(t)=1+\int_{0}^{t}cos(s)x(s)ds+\int_{0}^{t}sin(s)x(s)dB(s), s,t\in [0,0.5)$$
	with the exact solution $x(t)=\frac{1}{12}e^{\frac{-t}{4}+sin(t)+\frac{sin(2t)}{8}+\int_{0}^{t}sin(s)dB(s)}$, for $0\le t < 0.5$.
\end{example}
\begin{table}
	\caption{Mean error, standard deviation of error, and interval of confidence for mean error in Example \ref{Ex1} with m=8}
	\centering
	\begin{tabular}{l c c rr}
		\hline
		n & $\bar{x}_E$ &$s_E$ &
		\multicolumn{2}{c}{\underline{95\% interval of confidence for error mean. }}\\[1ex]
		& & &Lower &Upper\\
		\hline
		30 &0.00543042339 &0.00472214581 &0.00374062521 &0.00712022157\\
		50 &0.00626993437 &0.00442989552 &0.00504202998 &0.00749783876\\
		75 &0.00705047567 &0.00481071208 &0.00596170903 &0.00813924231\\
		100 &0.00640558992 &0.00481776079 &0.00546130880 &0.00734987103\\
		125 &0.00689936851 &0.00500025280 &0.00602278555 &0.00777595148\\
		150 &0.00686900260 &0.00580316061 &0.00594030348 &0.00779770171\\
		200 &0.00682115439 &0.00600805207 &0.00598848085 &0.00765382792\\	
		\hline
		
	\end{tabular}
\end{table}
\begin{figure}
	\centering
	\includegraphics[width=1.1\textwidth]{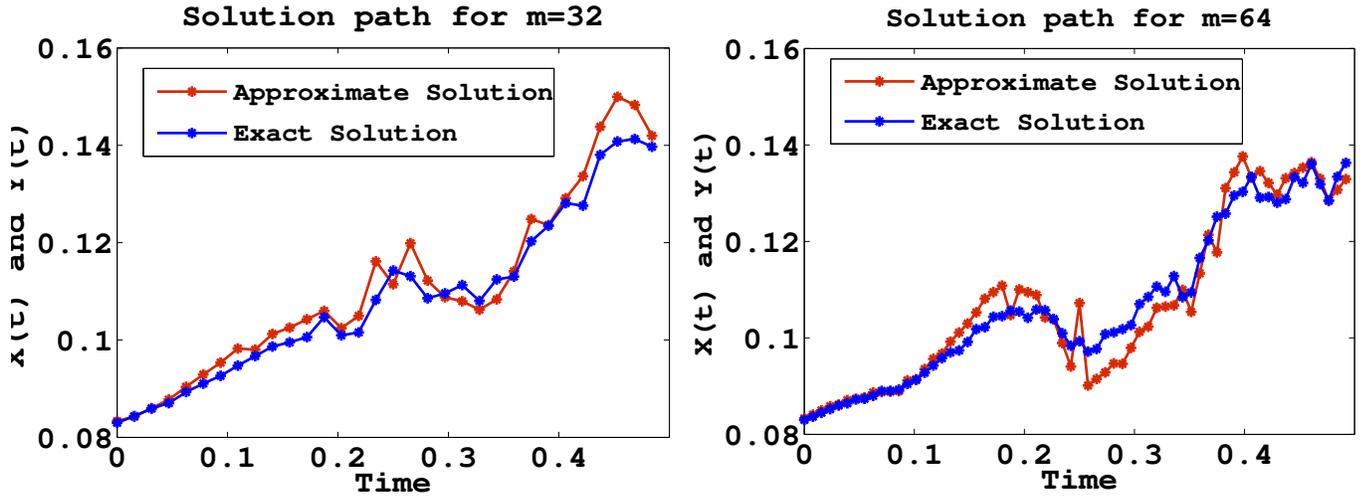}
	\caption{Example \ref{Ex1}'s approximate and exact solutions for m=32 and m=64 }
\end{figure}
\begin{figure}
	\centering
	\includegraphics[width=1\textwidth]{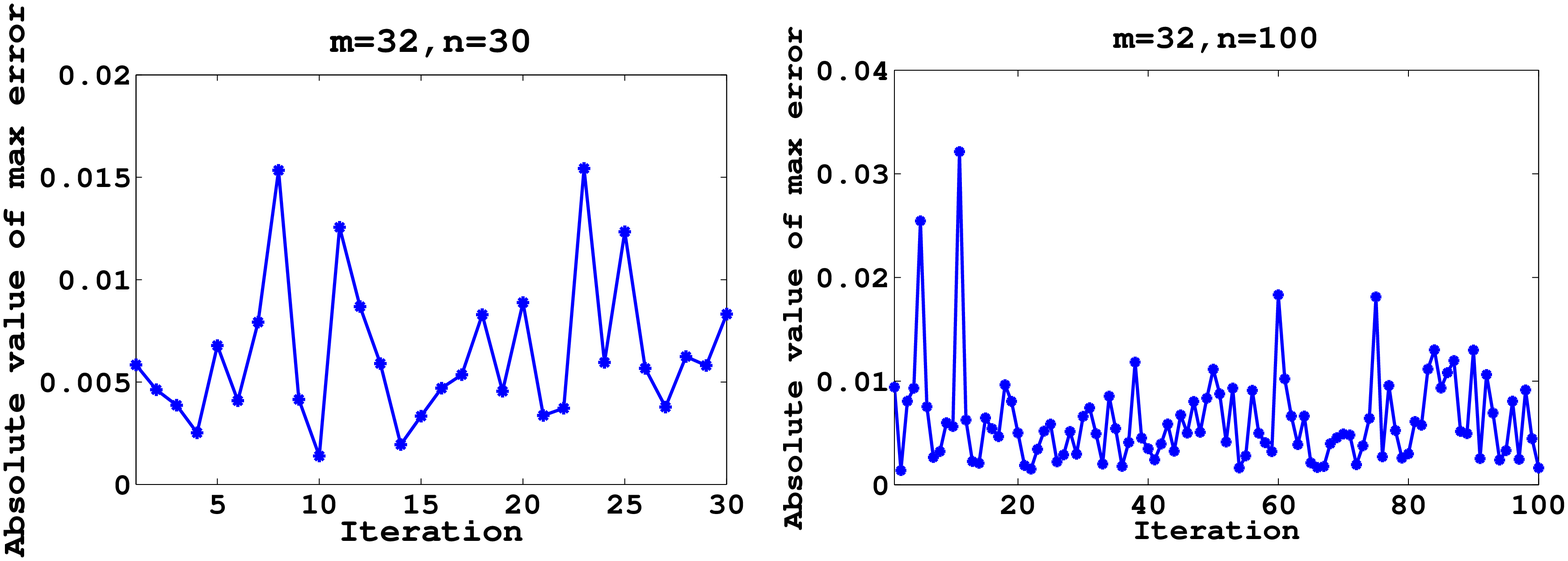}
	\caption{Example \ref{Ex1}'s error trend for m=32,n=30, and n=100}
\end{figure}

\begin{table}
	\caption{Mean error, standard deviation of error, and interval of confidence for mean error in Example \ref{Ex1} with m=32}
	\centering
	\begin{tabular}{l c c rr}
		\hline
		n & $\bar{x}_E$ &$s_E$ &
		\multicolumn{2}{c}{\underline{95\% interval of confidence for error mean. }}\\[1ex]
		& & &Lower &Upper\\
		\hline
		30 &0.00637765274 &0.00360745366 &0.00508674202 &0.00766856345\\
		50 &0.00720684095 &0.00586365605 &0.00558151841 &0.00883216349\\
		75 &0.00649984610 &0.00488908128 &0.00539334285 &0.00760634936\\
		100 &0.00625583011 &0.00474702145 &0.00532541390 &0.00718624631\\
		125 &0.00675880050 &0.00523369353 &0.00584129357 &0.00767630743\\
		150 &0.00650117417 &0.00478655986 &0.00573516505 &0.00726718328\\
		200 &0.00627666571 &0.00451326428 &0.00565115920 &0.00690217223\\	
		\hline
		
	\end{tabular}
\end{table}
\begin{example}\label{Ex3}
	Consider the linear stochastic Volterra integral equation shown below
	$$x(t)=f(t)+\int_{0}^{t}(s+t)x(s)ds+\int_{0}^{t}e^{-3(s+t)}x(s)dB(s)$$ where $s,t\in [0,1)$ in which $f(t)=t^2+sin(1+t)-cos(1+t)-2sin(t)-\frac{7t^4}{12}+\frac{1}{40}B(t)$.
\end{example}
\begin{center}
	\begin{table}
		\caption{Numerical result for m=32, m=64, and m=128 with n=50 in Example \ref{Ex3}}
		\begin{center}
			\begin{tabular}{c c|c|c}
				\hline
				$t$	&$m=2^5$  &$m=2^6$&$m=2^7$\\
				\hline
				0.1&0.2588463226&0.2786937102&0.2764612638\\
				0.2&0.2385970504&0.2482827054&0.2598350172\\
				0.3&0.2298470282&0.2362610436&0.2560260944\\
				0.4&0.2432612452&0.2547810418&0.2706006264\\
				0.5&0.3326364002&0.3482867788&0.3683207574\\
				0.6&0.3276372986&0.3354036892&0.3570142536\\
				0.7&0.3811228220&0.3973418530&0.4218934758\\
				0.8&0.4407132464&0.4640863230&0.4959036476\\
				0.9&0.5010298772&0.5335500110&0.5753660980\\
				\hline
			\end{tabular}
		\end{center}	
	\end{table}
\end{center}
\begin{figure}
	\centering
		\includegraphics[width=1.2\textwidth]{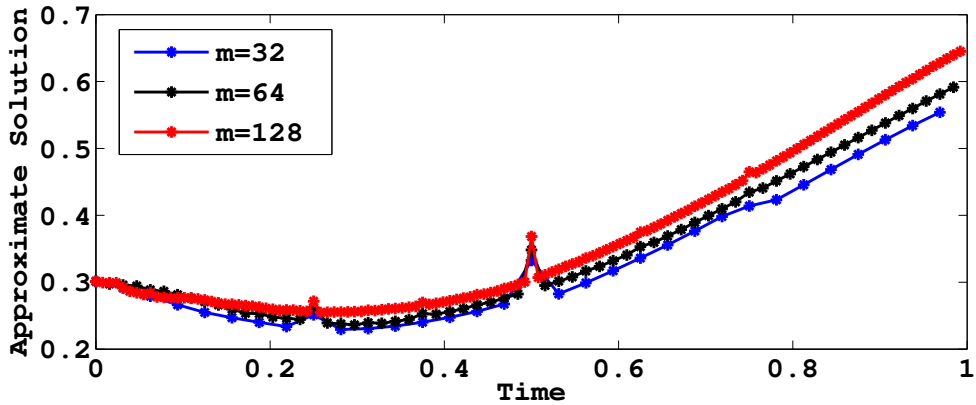}
		\caption{Example \ref{Ex3}'s approximate solution for m=32, m=64 and m=128 with 50 iterations}
	\end{figure}
\begin{example}\cite{Etheridge}\label{Stock}
	Consider the stock model with $C(t)$ as the risk-less cash bond and $S(t)$ as the single risky asset.
	$$dC(t)=\sin(t)C(t)dt, C_0=1$$
	$$S(t)=\frac{1}{10}+\int_{0}^{t} ln(1+s) S(s)ds+\int_{0}^{t}s S(s)dB(s) $$
	with the exact solution $C(t)=e^{1-cos(t)}$ and $S(t)=\frac{1}{10}e^{(1+t)ln(1+t)-t-\frac{t^3}{6}+\int_{0}^{t}sdB(s)}$.
	We will compare the exact solution of $S(t)$ with the approximate solution using our method.
\end{example}
\begin{figure}
	\centering
	\includegraphics[width=1\textwidth]{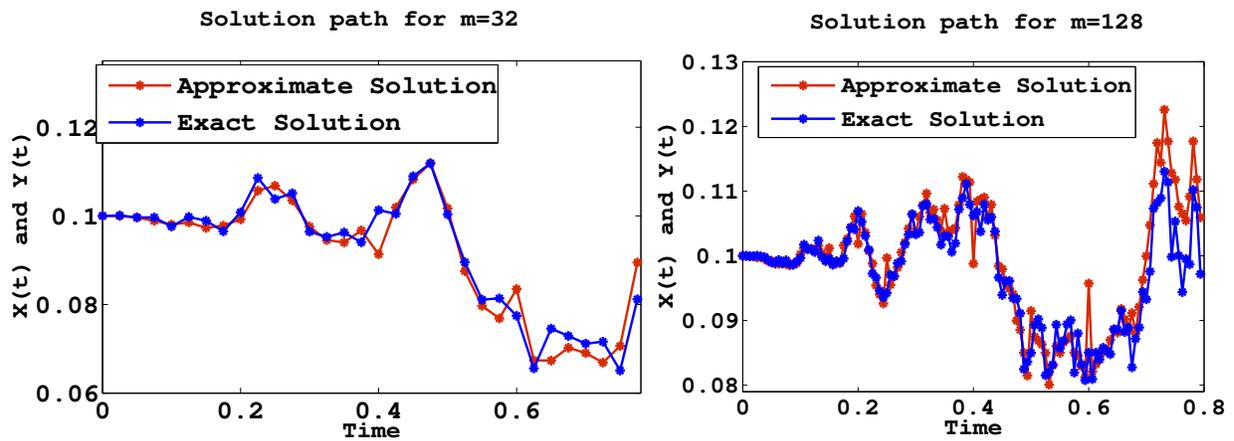}
	\caption{Stock model's approximate and exact solutions for m=32 and m=128 of Example \ref{Stock}}
\end{figure}
\begin{table}
	\caption{Mean error, standard deviation of error, and interval of confidence for mean error of Example \ref{Stock} with n=20}
	\centering
	\begin{tabular}{l c c rr}
		\hline
		$m$ & $\bar{x}_E$ &$s_E$ &
		\multicolumn{2}{c}{\underline{95\% interval of confidence for error mean. }}\\[1ex]
		& & &Lower &Upper\\
		\hline
		4 & 0.00483812406 & 0.00199063228 & 0.00396569099 & 0.00571055712\\
		8 & 0.00380827206 & 0.00251831518 & 0.00270457176 & 0.00491197235\\
		16 & 0.00432163487 & 0.00307689213 & 0.00297312744 & 0.00567014230\\
		32 & 0.00673390644 & 0.00484960640 & 0.00460847272 & 0.00885934015\\
		64 & 0.00714118035 & 0.00451414584 & 0.00516276871 & 0.00911959199\\
		128 & 0.00713451215 & 0.00627103849 & 0.00438610835 & 0.00988291594\\	
		\hline
		
	\end{tabular}
\end{table}
\begin{figure}
	\centering
	\includegraphics[width=1\textwidth]{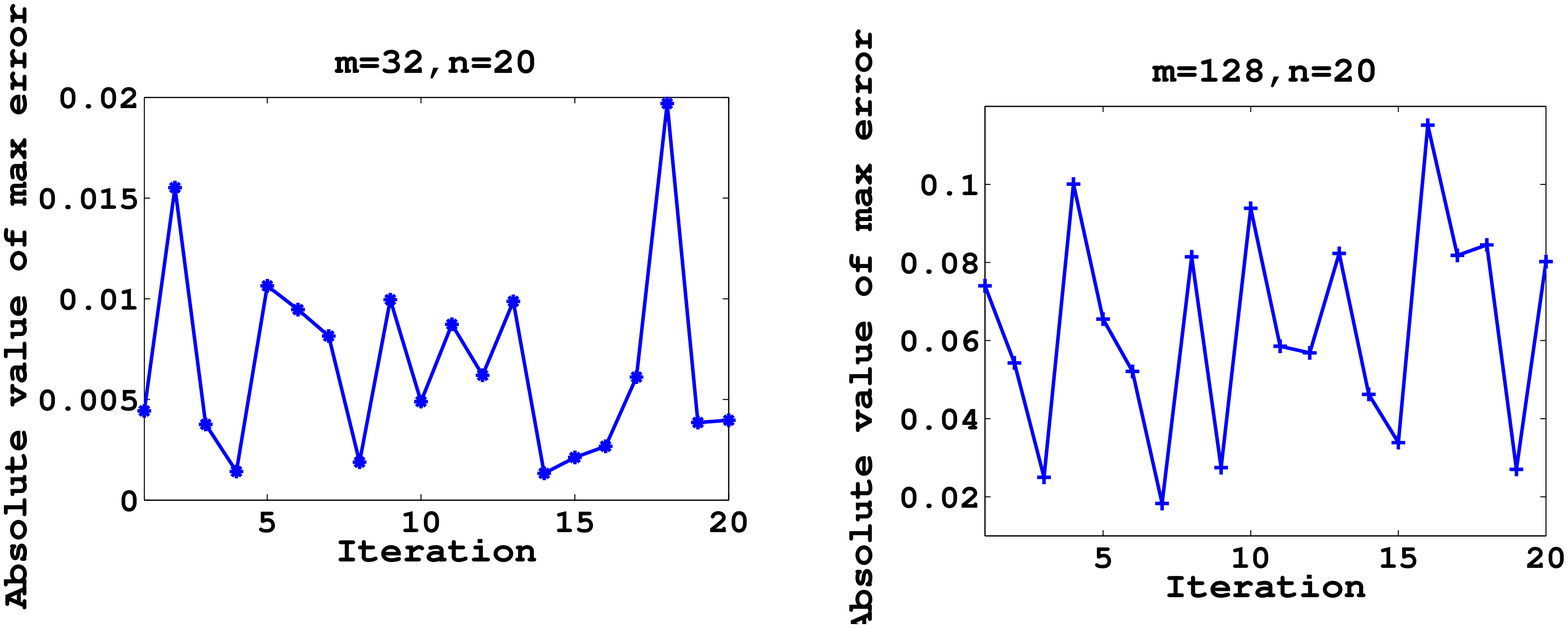}
	\caption{Example \ref{Stock}'s error trend for m=32, m=128 and n=20. }
\end{figure}	
	\section{Conclusion}
Due to the difficulty in determining the exact solution for the majority of SVIEs, numerical techniques are required to address these problems. Historically, several numerical solutions have been developed to approximate the solution of SVIEs. In addition, this article proposes a numerical method for approximating SVIE solutions. It also includes quantitative estimates for specific SVIEs. Error analysis of the methodology has been conducted to validate its dependability. As demonstrated in a number of preceding examples, numerical analysis demonstrates that the Walsh function approximation is preferable to existing methods for more precisely solving linear SVIEs. This concept could be expanded to include nonlinear SVIEs and SVIEs with singular kernels, which can be used to solve numerous physical problems.


\end{document}